\newcommand{\R}{{\mathbb R}} 
\newcommand{\N}{{\mathbb N}}
\newcommand{\D}{\mathcal{D}}
\newcommand{\oL}{\mathcal L}
\newtheorem{theorem}{Theorem}[section]
\newtheorem{definition}[theorem]{Definition}
\newtheorem{remark}[theorem]{Remark}
\newtheorem{lemma}[theorem]{Lemma}
\newtheorem{proposition}[theorem]{Proposition}
\numberwithin{equation}{section}
\newcommand{\beq}{\begin{equation}}
\newcommand{\eeq}{\end{equation}}
\definecolor{darkred}{rgb}{.70,.12,.20}
\definecolor{darkgreen}{rgb}{.20,.52,.14}
\title[Mixed Boundary Value Problems]{ Mixed Boundary Value Problems for non-Divergence Type \\  Elliptic Equations in Unbounded Domain }
\author[D. Cao]{Dat Cao$^{\dag}$}
\email{dat.cao@ttu.edu}
\author[A. Ibraguimov]{Akif Ibraguimov$^{\dag}$}
\email{akif.ibraguimov@ttu.edu}
\address{$^{\dag}$ Department of Mathematics and Statistics, Texas Tech University, Box 41042, Lubbock, TX 79409-1042}
\author[A. I. Nazarov]{Alexander I. Nazarov$^{\ddag}$}
\email{al.il.nazarov@gmail.com}
\address{$^{\ddag}$ St.Petersburg Department of Steklov Institute, Fontanka 27, St.Petersburg, 191023, Russia, 
and St.Petersburg State University, Universitetskii pr. 28, St.Petersburg, 198504, Russia}
\begin{document}

\begin{abstract} 

We investigate the qualitative properties of solution to the Zaremba type problem in unbounded  domain for the non-divergence elliptic equation with possible 
degeneration  at infinity. 
The main result is Phragm\'en-Lindel\"of type principle on growth/decay of a solution at infinity depending on both the structure of the Neumann portion of the boundary
and the "thickness" of its Dirichlet portion.  The result is formulated in terms of so-called $s$-capacity 
of the Dirichlet portion of the boundary, while the Neumann boundary should satisfy certain ``admissibility'' condition in the sequence of layers converging to 
infinity.  
 
\end{abstract}

\maketitle

\section{introduction}

We consider  non-divergence type elliptic operator
\begin{equation}\label{equ}
{\oL }u := - \sum_{i,j=1}^{n}a_{i j}(x) D_iD_ju \qquad\mbox{in}\quad \D.
\end{equation}
Such operators arise in theory of stochastic processes and other  applications, see, e.g., \cite{Krylov1,Dynkin,Fridman}.

In (\ref{equ}) $\D$ is an unbounded domain in $\mathbb{R}^n$, $n\geq 3$, and $D_i$ stands for the differentiation with respect to $x_i$.

We suppose that the boundary $\partial \D$ is split $\partial \D=\Gamma_1\cup \Gamma_2$. Here $\Gamma_1$ is support of the Dirichlet condition, 
and $\Gamma_2$ is support of the oblique derivative condition:
\begin{equation}\label{BC}
u(x)=\Phi(x) \ \ \mbox{on} \ \ \Gamma_1;\quad
\frac{\partial u}{\partial \ell}(x):=\lim_{\varepsilon \to +0} \frac {u(x)-u(x-\varepsilon \ell)}{\varepsilon}=\Psi(x) \ \ \mbox{on} \ \ \Gamma_2,
\end{equation}
where $\ell=\ell(x)$ is a measurable, uniformly non-tangential outward vector field on $\Gamma_2$. Without loss of generality we can suppose $|\ell|\equiv1$. 
We call $\Gamma_1$ the Dirichlet boundary, and $\Gamma_2$ the Neumann boundary.

We say  that $\D$ is $x_1$-oriented if there exists an increasing sequence  $\tau_j \to +\infty$ such that
 the cross-section
$$
\D(j)=\{x \in \D: x_1=\tau_j\} 
$$ 
is non-empty and connected for all $j$. 
Denote 
$$
\D(k,j)=\{x \in \D: \tau_k < x_1 < \tau_{j}\},\qquad k<j.
$$

We  prove Phragm\'en-Lindel\"of type theorem in unbounded domain. Roughly speaking it states that if the Wiener type series diverges then a positive sub-elliptic 
function, which vanishes on the Dirichlet boundary in a neighborhood of infinity tends either to zero or to infinity with prescribed speed as $x_1\to \infty$.    
Corresponding result in bounded domains for non-degenerate equation was obtained in \cite{AI-Naz}.

For divergence type equation in case of pure Dirichlet problem the result of this type was first proved in very general case by Maz'ya in terms of Wiener 
capacity in \cite{MazyaFL1}. Criteria for regularity at infinity in cylindrical type domain  
for Zaremba problem was obtained by Maz'ya and co-athors in \cite{MazyaFL}.

Here we consider the case of non-divergence  equation which may degenerate, and domain which may narrow or widen at infinity.
The Neumann boundary $\Gamma_2$ is supposed to satisfy ``inner cone'' condition, see \cite{nad-max-principle, nad-max-principle1}, and the ``admissibility'' 
condition, see Definition \ref{adm} below. In the case  of pure Dirichlet problem 
($\Gamma_2=\emptyset$) similar questions for certain class of domains were discussed by E.M. Landis \cite{landis-book,landis-paper},  
A.T. Abbasov \cite{abb}, and sharpened by Yu.A. Alkhutov \cite{Alkhutov}. \medskip

We always assume that the matrix of  coefficients is measurable and symmetric, and satisfies the ellipticity condition in any finite layer $\D(j-1,j+1):$
\begin{equation}\label{e1} 
e(j):=\max_{|\xi|=1}\sup_{x\in \D(j-1,j+1)} e(x,\xi)<\infty,
\end{equation}
where  $e(x,\xi)$ is the ellipticity function (see \cite{landis-book, Alkhutov})
\begin{equation}\label{ ellipticity function}
e(x,\xi)=  \frac{\sum _{i=1}^{n} a_{ii}(x)}{\sum_{i,j=1}^{n}a_{i j}(x)\xi_i\xi_j}.
\end{equation}
However, we admit that $e(j)$ may grow to $\infty$ as $j\to \infty$.

We will investigate coupled  impact of the degeneracy of the equation and speed of the domain narrowing on the behavior of solutions 
of mixed BVP. \medskip

The paper is organized as follows.
In Section \ref{pre} we review some known results about non-divergence equations:
the boundary point lemma in the form of Nadirashvili, 
the Landis Growth Lemma in case $\Gamma_2 = \emptyset$, and Growth Lemma in Krylov's form.

The Growth Lemma,  first introduced by Landis in \cite{Landis2, Landis1}, is a fundamental tool to study 
qualitative properties and regularity of solutions in bounded and unbounded domain. Recent review on Growth Lemma and its applications was given in 
\cite{safonov-1} (see also \cite{Aimar}).

In Section \ref{growth-admissible} we introduce domains with admissible Neumann boundary and  prove Growth Lemma for that type of domains.

In the last Section \ref{dichotomy}, dichotomy theorem is proved for solutions of mixed boundary value problem.\medskip

We always assume that $u \in W^2_{n, loc}(\D)\cap {\mathcal C}^{1}_{loc}(\D\cup\Gamma_2)\cap {\mathcal C}(\overline\D)$.

$B(x,R)$ stands for the ball with radius $R$ centered in $x$. By $C$ we denote any absolute constant.

\section{ Preliminairy results} \label{pre}

Recall that a function $u$ is called super-elliptic (resp. sub-elliptic) in $\D$  if ${\oL} u \geq 0$ (resp. ${\oL}u\leq 0$) in $\D$.

First we formulate a corollary of classical Aleksandrov-Bakel'man maximum principle, see, e.g., \cite{Ale} or survey \cite{naz-survey}.
\begin{proposition}\label{maxprinc}
Let $\D$ be a bounded domain. Let $u$ be super-elliptic (sub-elliptic) in $\D$. Then for $x\in\overline{\D}$
$$
u(x) \ge \min_{\partial \D} u \qquad \big({\rm resp.} \ \ u(x) \le \max_{\partial \D} u\big).
$$
 \end{proposition}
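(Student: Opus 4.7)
The plan is to deduce the proposition from a single application of the classical Aleksandrov--Bakel'man maximum principle by introducing an auxiliary translate of $u$. Since $\oL(-u)=-\oL u$, the sub-elliptic statement follows from the super-elliptic one applied to $-u$ (which converts $\min_{\partial\D}(-u)=-\max_{\partial\D}u$ back into the desired bound). So I would only treat the super-elliptic inequality $u(x)\ge \min_{\partial\D} u$.

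Accordingly, assume $\oL u\ge 0$ in $\D$, i.e.\ $\sum_{i,j} a_{ij}(x) D_iD_j u\le 0$ almost everywhere. Let $m:=\min_{\partial\D} u$, which is attained by compactness of $\partial\D$ and continuity of $u$ on $\overline{\D}$, and set $v:=m-u$. Then $v\in W^2_{n,\mathrm{loc}}(\D)\cap\mathcal{C}(\overline{\D})$, it satisfies $v\le 0$ on $\partial\D$, and
\[
\sum_{i,j} a_{ij}(x)\,D_iD_j v \;=\; -\sum_{i,j} a_{ij}(x)\,D_iD_j u \;\ge\; 0 \qquad\text{a.e.\ in }\D,
\]
so $v$ is a strong subsolution (in the non-divergence sense) of a linear elliptic operator with zero right-hand side. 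The classical Aleksandrov--Bakel'man estimate (cited at the start of Section~\ref{pre}) then yields $\sup_{\D} v\le \sup_{\partial\D} v^+=0$, which rearranges to $u\ge m$ throughout $\overline{\D}$.

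The only step demanding any care is that $(a_{ij})$ is merely measurable and symmetric, with the ellipticity function $e(x,\xi)$ only locally controlled by \eqref{e1}, so the matrix may well degenerate. The decisive simplification is that the right-hand side of the inequality satisfied by $v$ is identically zero, so the $L^n$-norm entering the ABP estimate vanishes and no uniform lower ellipticity bound is required; the regularity $v\in W^2_{n,\mathrm{loc}}(\D)$ alone suffices to apply Aleksandrov's pointwise estimate on the upper contact set of $v$. This is precisely why the authors present the result as an immediate corollary of the classical ABP principle and state it without a separate proof.
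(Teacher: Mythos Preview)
Your argument is correct and is precisely what the paper intends: the proposition is stated there without proof, merely as ``a corollary of the classical Aleksandrov--Bakel'man maximum principle'' with a reference to \cite{Ale} and the survey \cite{naz-survey}. Your reduction to the zero right-hand side case of ABP via $v=m-u$, together with the observation that the vanishing $L^n$-term makes a uniform lower ellipticity bound unnecessary, is exactly the content of that citation; note that the standing assumption \eqref{e1} guarantees $(a_{ij}(x))>0$ pointwise on any bounded layer, which is all the Aleksandrov contact-set argument needs when $f\equiv0$.
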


We say that $\Gamma_2$ satisfies {\bf inner cone condition} if there are $0<\varphi<\pi/2$ and $h>0$ such that for any $y\in \Gamma_2$ there exists a 
right cone $K\subset \D$ with the apex at $y$, apex angle $\varphi$ and the height $h$.

In \cite{nad-max-principle}, \cite{nad-max-principle1} N. Nadirashvili obtained the following fundamental generalization of the
Hopf-Oleinik boundary point lemma\footnote{A historical survey of this result can be found in \cite{Ap-Naz}.}. Notice that original papers of Nadirashvili deal with 
$u\in\mathcal{C}^2(\D)\cap {\mathcal C}^{1}_{loc}(\D\cup\Gamma_2)$ but using the the Aleksandrov-Bakel'man maximum principle one can transfer 
the proof without changes for $u \in W^2_{n, loc}(\D)\cap {\mathcal C}^{1}_{loc}(\D\cup\Gamma_2)$.

\begin{proposition}\label{nad} Let $\D$ be a bounded domain, and let a non-constant function $u$ be super-elliptic (sub-elliptic) in $\D$. Suppose that $y\in\Gamma_2$ 
and $u(y)\le u(x)$  (resp. $u(y)\ge u(x)$) for all $x \in \D$.  Let $\Gamma_2$ satisfy inner cone condition in a neighborhood of $y$.
Then for any  neighborhood $S$  of $y$ on $\Gamma_2$ and for any $\epsilon<\varphi$ there exists a point $\tilde x \in S$ s.t.
\begin{equation}
\frac{\partial u}{\partial \ell}(\tilde x)<0 \qquad \Big({\rm resp.} \ \ \frac{\partial u}{\partial \ell}(\tilde x)>0\Big) 
\end{equation}
for any direction $\ell$ s.t. the angle $\gamma$ between $\ell$ and the axis of $K$ is not greater then $\varphi-\epsilon$, see Fig. \ref{Cone}.   
\end{proposition}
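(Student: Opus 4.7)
I treat the super-elliptic case $\oL u \ge 0$; the sub-elliptic case is symmetric. Normalize so that $u(y) = 0$. Since $u$ is non-constant, the strong maximum principle --- a consequence of Proposition~\ref{maxprinc} applied in small interior balls at a putative interior zero --- gives $u > 0$ throughout $\D$. Place coordinates so that $y = 0$ and the axis of the inner cone $K$ is the positive $x_1$-axis, so that $K$ opens into $\D$ in the direction $e_1$.

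The core of the argument is a barrier comparison in a truncated sub-cone $K^\ast \subset K$ of slightly smaller aperture,
\[
K^\ast = \{ x : \angle(x, e_1) \le \varphi - \tfrac{\epsilon}{2},\ |x| \le h\}.
\]
In spherical coordinates $(r, \theta)$ I use a barrier of the form $v(x) = r^{\alpha}\psi(\theta)$ with $\psi \ge 0$ vanishing at $\theta = \varphi - \epsilon/2$ and positive inside (for instance $\psi(\theta) = \cos(\mu\theta) - \cos(\mu(\varphi - \epsilon/2))$ with $\mu = \pi/(2(\varphi - \epsilon/2))$). The exponent $\alpha$ is chosen large enough, depending on $\mu$, the dimension $n$, and the ellipticity function $e(\cdot)$ on $K^\ast$, which is finite by (\ref{e1}), so that $\oL v \le 0$ in $K^\ast$. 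After scaling $v$ so that $v \le u$ on the outer spherical cap $\{|x| = h\} \cap K^\ast$ (where $u > 0$ by the strong maximum principle), Proposition~\ref{maxprinc} applied to $u - v$ on $K^\ast$ yields $u \ge v$ throughout $K^\ast$, and hence a quantitative lower bound $u(x) \ge C|x|^\alpha$ on any interior sub-cone of half-angle $\varphi - \epsilon$.

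To obtain the oblique-derivative statement on $\Gamma_2$, I fix the neighborhood $S$ of $y$ and select $\tilde x \in \Gamma_2 \cap S$ sufficiently close to $y$ that the inner cone at $\tilde x$, guaranteed by the cone condition in a neighborhood of $y$, has axis within angular distance $\epsilon/4$ of $e_1$. Repeating the barrier construction with apex at $\tilde x$ yields $u(x) - u(\tilde x) \ge v_{\tilde x}(x)$ in the local sub-cone at $\tilde x$, and an explicit computation of $\partial v_{\tilde x}/\partial \ell(\tilde x)$ shows that it is strictly negative for every $\ell$ with $\angle(\ell, \text{axis of }K) \le \varphi - \epsilon$. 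Hence $\partial u/\partial \ell(\tilde x) \le \partial v_{\tilde x}/\partial \ell(\tilde x) < 0$. The principal obstacle --- and the reason the conclusion is stated at a nearby $\tilde x$ rather than at $y$ itself --- is the tension between the exponent $\alpha$ needed for the operator inequality $\oL v \le 0$ (typically $\alpha > 1$) and the exponent needed for a strictly negative oblique derivative at the apex (which would require $\alpha = 1$). The resolution is precisely to displace the apex to $\tilde x \neq y$, where $u(\tilde x) > 0$ provides a positive offset and the linear behavior of the barrier in the admissible interior directions is recovered.
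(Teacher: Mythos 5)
The paper does not actually prove this proposition: it is Nadirashvili's boundary point lemma, quoted from \cite{nad-max-principle,nad-max-principle1} with only the remark that the Aleksandrov--Bakel'man maximum principle allows one to pass from $\mathcal{C}^2$ to $W^2_{n,loc}$ solutions. Your attempt must therefore stand as a self-contained proof, and it has a genuine gap precisely at the step you yourself flag as ``the principal obstacle.''

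A barrier of the form $v=r^{\alpha}\psi(\theta)$ that is sub-elliptic in a cone of aperture $\varphi<\pi/2$ necessarily has $\alpha>1$, hence $\nabla v=0$ at its apex. Consequently $\partial v_{\tilde x}/\partial\ell(\tilde x)=0$, not $<0$, and the comparison $u-u(\tilde x)\ge v_{\tilde x}$ yields only $\partial u/\partial\ell(\tilde x)\le 0$. Your proposed resolution --- displacing the apex to $\tilde x$ so that ``$u(\tilde x)>0$ provides a positive offset and the linear behavior of the barrier is recovered'' --- does not resolve anything: the same power barrier anchored at $\tilde x$ is equally degenerate at its own apex, and if this argument produced a strict sign at $\tilde x$ it would produce one verbatim at $y$, i.e.\ it would prove the classical Hopf lemma at every cone point, which is false (see the counterexample in \cite{Ap-Naz}, cited in the paper; this is exactly why the conclusion is only asserted at \emph{some} nearby $\tilde x$). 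A second, independent problem is that the comparison at $\tilde x$ is unjustified: unlike $y$, the point $\tilde x$ is not known to be a minimum point of $u$, so you have no inequality $u\ge u(\tilde x)$ on the lateral boundary of the cone at $\tilde x$, which you would need before applying Proposition \ref{maxprinc} to $u-u(\tilde x)-v_{\tilde x}$. The entire content of Nadirashvili's lemma is the mechanism that produces a single good point $\tilde x\in S$ --- an argument by contradiction assuming the oblique derivative is nonnegative at all candidate points and deriving, via an iteration over cones and the Aleksandrov--Bakel'man estimate, that $u$ attains its minimum inside $\D$ --- and that mechanism is absent from your sketch. A minor further point: the strong maximum principle for merely measurable coefficients does not follow from Proposition \ref{maxprinc} ``applied in small interior balls''; one needs the Krylov--Safonov Harnack inequality or a growth-lemma argument.
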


\begin{figure}[ht]
\begin{center}
\advance\leftskip-3cm
\advance\rightskip-3cm
\includegraphics[scale=0.5]{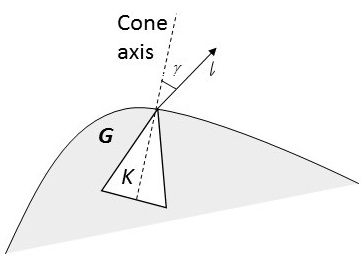}
\caption{Inner cone condition}.
\label{Cone}
\end{center}\end{figure}

From Propositions \ref{maxprinc} and \ref{nad} we obtain the comparison theorem for mixed boundary value problem.

\begin{proposition}\label{comparison_theorem}
Let $\D$ be a bounded domain, and let $\Gamma_2$ satisfy inner cone condition. Assume the vector field $\ell$ on $\Gamma_2$ satisfies the same condition as in 
Proposition \ref{nad}.

Suppose that $u,v \in W^2_{n, loc}(\D)\cap {\mathcal C}^{1}(\D\cup\Gamma_2)\cap {\mathcal C}(\overline \D)$.
If ${\oL} u \leq {\oL} v  $ in $\D$, $u\leq v $ on $\Gamma_1$, and
$\frac{\partial u}{\partial \ell}\leq \frac{\partial v}{\partial \ell}$ on $\Gamma_2$  then $v\geq u $ in $\overline \D$.
\end{proposition}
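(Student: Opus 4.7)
The plan is to reduce the comparison $v \ge u$ to showing that the sub-elliptic function $w := u - v$ is non-positive, and then argue by contradiction combining Proposition~\ref{maxprinc} with the sub-elliptic version of Nadirashvili's boundary point lemma (Proposition~\ref{nad}).

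First I would observe that the linearity of $\oL$ together with the hypotheses immediately gives $\oL w \le 0$ in $\D$, $w \le 0$ on $\Gamma_1$, and $\partial w/\partial \ell \le 0$ on $\Gamma_2$. Assume for contradiction that $M := \max_{\overline \D} w > 0$. Since $w$ is sub-elliptic, Proposition~\ref{maxprinc} gives $M = \max_{\partial \D} w$, and because $w \le 0 < M$ on $\Gamma_1$, the maximum is attained at some point $y \in \Gamma_2$. The case that $w$ is constant in $\D$ is ruled out by $\Gamma_1 \ne \emptyset$: it would force $w \equiv M > 0$ on $\overline\D$, contradicting $w \le 0$ on $\Gamma_1$.

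Next, since $w$ is non-constant and $\Gamma_2$ satisfies the inner cone condition in a neighborhood of $y$, the sub-elliptic form of Proposition~\ref{nad} produces a point $\tilde x \in \Gamma_2$ (in any preassigned neighborhood $S$ of $y$) at which
$$
\frac{\partial w}{\partial \ell'}(\tilde x) > 0
$$
for every direction $\ell'$ whose angle with the axis of the inner cone $K$ at $\tilde x$ is at most $\varphi - \e$. The hypothesis that the oblique vector field $\ell$ on $\Gamma_2$ satisfies the ``same condition as in Proposition~\ref{nad}'' is read as saying exactly that $\ell(\tilde x)$ is an admissible direction in this sense (uniformly in $\tilde x$); plugging $\ell' = \ell(\tilde x)$ then gives $\partial w/\partial \ell (\tilde x) > 0$, contradicting the boundary condition $\partial w/\partial \ell \le 0$ on $\Gamma_2$. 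Hence $M \le 0$, which is the claimed inequality $u \le v$ on $\overline \D$.

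I expect the only real subtlety to lie in this last matching step: Nadirashvili's lemma is silent about which specific point $\tilde x$ in the neighborhood $S$ yields the positive derivative, so the direction $\ell(\tilde x)$ from the prescribed oblique field must be admissible for the cone at that particular $\tilde x$. The uniform non-tangentiality built into the setup (the common constants $\varphi, h, \e$ independent of $x \in \Gamma_2$) is what makes this work; without it one would have to be more delicate. All other ingredients --- linearity of $\oL$, the maximum principle, and the boundary lemma --- apply essentially as stated.
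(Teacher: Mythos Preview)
Your argument is correct and is precisely the derivation the paper has in mind: the paper does not spell out a proof but simply states that Proposition~\ref{comparison_theorem} follows ``from Propositions~\ref{maxprinc} and~\ref{nad}'', and your contradiction argument via $w=u-v$ is the standard way to combine those two ingredients. Your closing remark about the matching of $\ell(\tilde x)$ with the admissible cone directions is exactly the point absorbed in the paper's blanket hypothesis that ``the vector field $\ell$ on $\Gamma_2$ satisfies the same condition as in Proposition~\ref{nad}''.
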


We recall the well-known notion of $s$-capacity, see, e.g., \cite{landis-book, landkof}. 
\begin{definition}\label{s-capacity}
Let $s>0$ and let $H$ be a Borel set in $\R^n$. Let a measure $\mu$ be defined on Borel subsets of $H$. We 
write $\mu \in \mathcal{M}(H)$ if 
\begin{equation}
\int\limits_{H} \frac{d\mu(y)}{|x-y|^s} \leq 1, \quad  \text{for} \quad x\in \R^n\setminus H. 
\end{equation}  
Then the quantity
\begin{equation}
{ \mathbf{C}}_s(H)=\sup_{\mu\in \mathcal{M}(H)} \mu(H) 
\end{equation}
is called {\bf $s$-capacity} of $H$.
\end{definition}

We also recall the following simple statement, see  \cite[Ch. I, Lemma 3.1]{landis-book}.

\begin{proposition}\label{subsol}
Let 
\begin{equation*}\label{e} 
e:=\max_{|\xi|=1}\sup_{x\in \D }e(x,\xi) < \infty.
\end{equation*}. 
If $s\ge e-2 $ then $ \oL |x|^{-s}\le 0$ for $x\ne0$. 
\end{proposition}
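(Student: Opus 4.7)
The plan is to prove the claim by direct calculation. I would start by writing $u(x) = |x|^{-s} = r^{-s}$ with $r = |x|$, and compute its second derivatives using the chain rule. First I would compute
\[
D_i u = -s\, r^{-s-2}\, x_i,
\]
then differentiate once more to get
\[
D_i D_j u = s(s+2)\, r^{-s-4}\, x_i x_j - s\, r^{-s-2}\, \delta_{ij}.
\]
These are elementary and the only routine step, so I would present them briefly without grinding.

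Next I would substitute into the operator and factor out common scalar quantities. Writing $\xi = x/|x|$, so that $|\xi|=1$, and contracting against $a_{ij}$, one obtains
\[
\sum_{i,j=1}^n a_{ij}(x)\, D_i D_j u = s\, r^{-s-2}\left[(s+2)\sum_{i,j=1}^n a_{ij}(x)\,\xi_i \xi_j - \sum_{i=1}^n a_{ii}(x)\right].
\]
Now I would use the definition of the ellipticity function: since $\sum_{i} a_{ii}(x) = e(x,\xi)\cdot \sum_{i,j} a_{ij}(x)\xi_i\xi_j$, the bracket factors as
\[
\sum_{i,j} a_{ij}(x)\,\xi_i \xi_j\cdot \bigl[(s+2) - e(x,\xi)\bigr].
\]

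The sign is then immediate: the quadratic form $\sum a_{ij}\xi_i\xi_j$ is positive by ellipticity, $s, r^{-s-2} \ge 0$, and by hypothesis $s \ge e - 2 \ge e(x,\xi) - 2$, so the bracket is nonnegative. Hence $\sum a_{ij} D_iD_j u \ge 0$, i.e. $\oL u = -\sum a_{ij} D_i D_j u \le 0$ for $x \ne 0$, as desired. There is no real obstacle here; the only subtlety is remembering that the ellipticity function is defined so that $\sum a_{ii}$ appears in the numerator and the directional form in the denominator, which is exactly the algebraic identity that makes the sign of the bracket comparable to $s+2 - e(x,\xi)$.
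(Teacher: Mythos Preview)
Your proof is correct; the direct computation of $D_iD_j|x|^{-s}$ followed by contraction against $a_{ij}$ and use of the definition of $e(x,\xi)$ is exactly the standard argument. The paper itself does not give a proof of this proposition but simply refers to \cite[Ch.~I, Lemma~3.1]{landis-book}, where the same calculation appears, so there is nothing further to compare.
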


Next, we give a quantitative variant of the Landis Growth Lemma (see \cite[Ch. I, Lemma 4.1]{landis-book}).

\begin{proposition} \label{Land}
 Let $u$ be sub-elliptic in $\D \cap B(x^0,aR)$. Suppose that $u>0$ in $\D$ and $u=0$ on 
 $\Gamma_1=\partial \D \cap B(x^0,aR)$. Then 
  
\begin{equation}\label{boundlayer_capacity}
\sup_{\D \cap B(x^0,a R)} u\ge \frac{\sup_{\D \cap B(x^0,\frac a4 R)} u}{ 1-\eta_1 \mathbf{C}_s(H)R^{-s}},
\end{equation} 
where $s=e-2$, 
\begin{equation} \label{eta-1}
\eta_1=\eta_1(s)=\left(\frac{2}{a}\right)^s\left[1-\left(\frac{2}{3}\right)^s\right],
\end{equation}
and $H=B(x^0,\frac a4 R)\setminus \D$.

Consequently if there exists a ball $B_{q R}$ of radius of  $q R \ (0<q<\frac a4)$  belonging to $H$ then  
\begin{equation}\label{etazero}
\sup_{\D \cap B(x^0,a R)} u\ge \frac{\sup_{\D \cap B(x^0, \frac a 4R)} u}{1-\eta_1q^s}.
\end{equation} 
%
\end{proposition}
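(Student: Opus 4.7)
The strategy is to compare $u$ against a super-elliptic barrier built from an $s$-capacity potential, and invoke the maximum principle (Proposition~\ref{maxprinc}) on $G:=\D\cap B(x^0,aR)$. Set $M:=\sup_G u$ and $m:=\sup_{\D\cap B(x^0,\frac{a}{4}R)} u$; the target \eqref{boundlayer_capacity} is equivalent to $M(1-\eta_1\mathbf{C}_s(H)R^{-s})\ge m$.

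First I fix any $\mu\in\mathcal{M}(H)$ and form the Riesz potential
\[
w(x):=\int_{H}\frac{d\mu(y)}{|x-y|^{s}}.
\]
By Proposition~\ref{subsol} each kernel $|x-y|^{-s}$ is super-elliptic (since $s=e-2$), so integrating in $\mu$ gives $\oL w\le 0$ in $G$; and by definition of $\mathcal{M}(H)$, $w\le 1$ on $\overline{\D}$. Purely from the geometry $H\subset B(x^0,\frac{a}{4}R)$ one gets the two one-sided estimates
\[
w(x)\le \beta:=\mu(H)\Big(\tfrac{3a}{4}R\Big)^{-s}\ \text{on}\ \partial B(x^0,aR),\qquad w(x)\ge \gamma:=\mu(H)\Big(\tfrac{a}{2}R\Big)^{-s}\ \text{on}\ B(x^0,\tfrac{a}{4}R),
\]
and an elementary computation gives $\gamma-\beta=\mu(H)R^{-s}(2/a)^{s}[1-(2/3)^{s}]=\mu(H)R^{-s}\eta_1$, exactly the constant in \eqref{eta-1}.

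Next I set the barrier $v(x):=M(1-w(x))/(1-\beta)$ in the nontrivial regime $\beta<1$; it inherits super-ellipticity from $w$. On $\Gamma_1$, $u=0\le v$ because $w\le 1$; on $\D\cap\partial B(x^0,aR)$, $w\le\beta$ gives $v\ge M\ge u$. Hence $v-u$ is super-elliptic in $G$ and nonnegative on $\partial G$, and Proposition~\ref{maxprinc} forces $u\le v$ throughout $G$. Restricting to the inner ball and using $w\ge\gamma$ there,
\[
m\le\frac{M(1-\gamma)}{1-\beta},\qquad\text{hence}\qquad M-m\ \ge\ M\,\frac{\gamma-\beta}{1-\beta}\ \ge\ M\mu(H)R^{-s}\eta_1.
\]
Letting $\mu(H)\to\mathbf{C}_s(H)$ through the definition of the capacity yields \eqref{boundlayer_capacity}. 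For \eqref{etazero}, a ball $B_{qR}\subset H$ admits the test measure $(qR)^s\delta_{\text{center}}\in\mathcal{M}(B_{qR})$, so by monotonicity $\mathbf{C}_s(H)\ge(qR)^s$ and \eqref{boundlayer_capacity} specializes to \eqref{etazero}.

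I expect the only real obstacle to be the bookkeeping that produces \emph{exactly} the constant $\eta_1$ of \eqref{eta-1}—namely, the identity $\gamma-\beta=\mu(H)R^{-s}(2/a)^{s}[1-(2/3)^{s}]$ arising from the annular geometry between $\partial B(x^0,\frac{a}{4}R)$ and $\partial B(x^0,aR)$; once the barrier $v$ is chosen and its two boundary inequalities verified, everything else is a direct application of the maximum principle.
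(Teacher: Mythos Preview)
Your argument is correct and is precisely the classical Landis barrier method: build the Riesz potential of an admissible measure on $H$, bound it above on $\partial B(x^0,aR)$ and below on $B(x^0,\frac a4 R)$ using the annular geometry, and compare $u$ with the resulting super-elliptic barrier via the maximum principle; the identity $\gamma-\beta=\mu(H)R^{-s}(2/a)^s[1-(2/3)^s]$ is exactly what yields the stated constant $\eta_1$, and the lower bound $\mathbf C_s(H)\ge (qR)^s$ via the point mass is the standard way to derive~\eqref{etazero}. The paper itself does not give a proof of this proposition---it records it as a quantitative variant of \cite[Ch.~I, Lemma~4.1]{landis-book} and uses it as a black box---so your write-up is in effect reproducing the cited Landis argument. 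One cosmetic remark: your barrier $v=M(1-w)/(1-\beta)$ requires $\beta<1$, which is not automatic for every $\mu\in\mathcal M(H)$; the cleaner choice $v=M(1-w+\beta)$ avoids the division and leads directly to $m\le M(1-(\gamma-\beta))$, hence $M-m\ge M\mu(H)R^{-s}\eta_1$, with no side condition.
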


The following definition of barrier and Growth Lemma for mixed boundary problem was introduced in \cite[Sec. 2]{AI-Naz}. For the reader's convenience
we give it with the full proof. For the Dirichlet boundary conditions this type of Growth Lemma was first introduced in \cite{krylovizvest}.   

\begin{definition}\label{strict_growth}
Let $\D$ be a  domain with  boundary $\partial \D=\Gamma_1\cup \Gamma_2$.  Assume that ``small''   ball $B(x^0,R)$ and ``big'' ball $B(x^0,aR)$, $a>1$, are 
intersecting with $\D$. 

We call the function $w$ {\bf barrier} for the mixed boundary value problem in the balls $B(x^0,R)$ and $B(x^0,aR)$ if the following conditions hold:
\begin{equation}\label{Lw<0}
 w \ \text{is sub-elliptic } \  \text{in} \ \D \cap B(x^0,aR).
\end{equation}
\begin{equation}\label{w<1}
w(x)\leq 1 \ \text{on} \ \Gamma_1\cap B(x^0,aR).
\end{equation} 
\begin{equation}\label{w_l<0}
\frac{\partial w}{\partial \ell}\leq 0 \ \text{on} \ \Gamma_2\cap B(x^0,aR).
\end{equation} 
\begin{equation}\label{w<0}
w\leq 0 \ \text{on} \ \overline \D \cap \partial B(x^0,aR).
\end{equation}  
\begin{equation}\label{w>eta}
w(x)\ge \beta_0 \ \text{in } \
 B(x^0,R)\cap \D
\end{equation}   
for some $\beta_0>0$.
\end{definition} 

\begin{lemma}\label{growth-one}
Let a domain $\D$ and let balls $B(x^0,R)$, $B(x^0,aR)$ be the same as in  Definition \ref{strict_growth}.
Suppose that $ \Gamma_2$  satisfies the inner cone condition and the vector field $\ell$ on $\Gamma_2$ satisfies the same condition as in Proposition \ref{nad}. 
Assume that there exists a barrier $w$ for mixed BVP in the balls $B(x^0,R)$ and $B(x^0,aR)$.

Suppose that a function $u$ is sub-elliptic in $\D \cap B(x^0,aR)$, $u>0$ in $\D$, $u\leq 0$ on $\Gamma_1\cap B(x^0,aR)$ and 
$\frac{\partial u}{\partial \ell}\leq 0$ on $\Gamma_2\cap B(x^0,aR)$. Then 
\begin{equation}\label{grow-prop-1}
\sup_{\D \cap B(x^0,a R)} u\ge \frac{\sup_{\D \cap B(x^0,R)} u}{1-\beta_0},
\end{equation}  
where $\beta_0$ is the constant from Definition \ref{strict_growth}.
\end{lemma}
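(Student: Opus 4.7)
The plan is a straightforward barrier comparison. Let $M:=\sup_{\D \cap B(x^0,aR)}u$; if $M=+\infty$ the inequality (\ref{grow-prop-1}) is trivial, so assume $M<+\infty$. Since $u>0$ in $\D$ we certainly have $M\ge 0$. I would then introduce the auxiliary function
$$
v(x):=M\bigl(1-w(x)\bigr).
$$
By (\ref{Lw<0}), $v$ is super-elliptic in $\D\cap B(x^0,aR)$, while $u$ is sub-elliptic there by hypothesis, so $\oL u\le 0\le \oL v$.

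Next I would verify the relevant boundary inequalities for the bounded domain $\D\cap B(x^0,aR)$. On $\Gamma_1\cap B(x^0,aR)$, (\ref{w<1}) gives $v=M(1-w)\ge 0\ge u$. On the spherical cap $\overline{\D}\cap\partial B(x^0,aR)$, (\ref{w<0}) gives $v\ge M\ge u$. On $\Gamma_2\cap B(x^0,aR)$, the oblique condition (\ref{w_l<0}) yields $\partial v/\partial\ell=-M\,\partial w/\partial\ell\ge 0\ge \partial u/\partial\ell$. An appeal to Proposition \ref{comparison_theorem}, applied on the bounded domain $\D\cap B(x^0,aR)$ with Dirichlet part $(\Gamma_1\cap\overline{B(x^0,aR)})\cup(\overline{\D}\cap\partial B(x^0,aR))$ and Neumann part $\Gamma_2\cap B(x^0,aR)$, then delivers $u\le v$ throughout $\overline{\D\cap B(x^0,aR)}$.

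To finish, I would restrict to $x\in \D\cap B(x^0,R)$ and use (\ref{w>eta}) to obtain
$$
u(x)\le M\bigl(1-w(x)\bigr)\le M(1-\beta_0).
$$
Taking the supremum over $\D\cap B(x^0,R)$ and dividing by $1-\beta_0>0$ gives precisely (\ref{grow-prop-1}).

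The only subtlety I expect is the hybrid character of the boundary of $\D\cap B(x^0,aR)$: part of it is the Dirichlet set $\Gamma_1$, part is the Neumann set $\Gamma_2$, and part is the spherical cap $\partial B(x^0,aR)$, on which no a priori condition is imposed on $u$. Because $v\ge u$ is established on the cap pointwise via (\ref{w<0}), the cap can be absorbed into the Dirichlet-type data when applying Proposition \ref{comparison_theorem}; this is the one place where the statement of the comparison principle must be read carefully, but no new argument is required beyond what is already in the preliminaries.
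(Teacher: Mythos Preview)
Your proof is correct and follows essentially the same approach as the paper: define $v=M(1-w)$, check the comparison hypotheses on each boundary piece, apply Proposition \ref{comparison_theorem}, and use (\ref{w>eta}) to conclude. The paper's proof is slightly terser (it does not single out the case $M=+\infty$ or dwell on the hybrid boundary), but the argument is identical.
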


\begin{proof}
Let $M=\sup_{\D \cap B(x^0,aR)} u$. We define
\begin{equation}
v(x)=M(1-w(x)).
\end{equation} 
Obviously ${\oL} v \geq 0\geq {\oL} u,$  $ v(x)\geq 0 \geq u(x)$ on $\Gamma_1\cap B(x^0,aR)$,  
$ \frac{\partial v}{\partial \ell} \ge 0\ge \frac{\partial v}{\partial \ell}$ on $\Gamma_2$, and $v(x)\geq M \geq u(x)$ on $\partial B(x^0,aR)\cap \D$. 
Applying Proposition \ref{comparison_theorem} to functions $v(x)$ and $u(x)$ in the domain $\D \cap B(x^0,aR)$ we get that $v(x) \geq u(x)$. This gives 
in the intersection $\D \cap B(x^0,R)$, with regard of (\ref{w>eta}),
\begin{equation*}
M(1-\beta_0)\geq M(1-\inf_{\D \cap B(x^0,R)} w)\geq  \sup_{\D \cap B(x^0,R)} u, 
\end{equation*}
and the statement follows.
\end{proof} 

Let us introduce a sufficient condition for existence of the barrier in the Definition \ref{strict_growth}.

\begin{lemma} \label{barrier-2}
Let $\D$ and balls $B(x^0,R)$, $B(x^0,aR)$ be the same as in the Definition \ref{strict_growth}. 

Assume that there exists a ball  $B(x^0,\alpha R)$, $0<\alpha<1$, such that $B(x^0,\alpha R)\cap\D=\emptyset$ and $\Gamma_1$ separates the ball 
$B(x^0,\alpha R)$ from $\Gamma_2 $ in $ B(x^0,aR)$. 
Assume that  \begin{equation}\label{(x-x^0)dotl}
(x-x^0)\cdot \ell \ge0\end{equation} 
for any  $x\in\Gamma_2\cap B(x^0,aR)$.
 
Then for $s\ge e-2$ the function $\displaystyle w(x)= \alpha^s \big(R^s|x-x^0|^{-s} - a^{-s}\big)$ is a barrier for the mixed BVP in the balls  $B(x^0,R)$ and 
$B(x^0,aR)$, with $\beta_0:=\alpha^s(1 - a^{-s})$.
\end{lemma}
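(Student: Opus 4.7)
The plan is to verify, one by one, the five conditions of Definition \ref{strict_growth} for the explicit radial function $w(x)=\alpha^s\bigl(R^s|x-x^0|^{-s}-a^{-s}\bigr)$. All five items reduce to short computations, three of them to direct substitution of the distance $|x-x^0|$, and there is no genuinely delicate step.

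For the sub-elliptic property (\ref{Lw<0}) I would observe that the cavity hypothesis $B(x^0,\alpha R)\cap\D=\emptyset$ keeps the singularity of $|x-x^0|^{-s}$ away from $\D\cap B(x^0,aR)$; hence $|x-x^0|\ge\alpha R>0$ throughout that set. Because $\oL$ has no lower-order terms, Proposition \ref{subsol} applied to $|x-x^0|^{-s}$ with $s\ge e-2$ gives $\oL w\le 0$ (the additive constant $-\alpha^s a^{-s}$ is annihilated and the positive multiplicative constant $\alpha^s R^s$ preserves the sign). For the Dirichlet bound (\ref{w<1}), every $y\in\Gamma_1\cap B(x^0,aR)\subset\partial\D$ is a limit of points of $\D$, each at distance at least $\alpha R$ from $x^0$; hence $|y-x^0|\ge\alpha R$ and substitution gives $w(y)\le 1-\alpha^s a^{-s}<1$.

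For (\ref{w_l<0}) I would compute
\begin{equation*}
\nabla w(x)=-s\,\alpha^s R^s\,|x-x^0|^{-s-2}\,(x-x^0),
\end{equation*}
so $\frac{\partial w}{\partial\ell}(x)$ is a non-positive multiple of $(x-x^0)\cdot\ell$, which is non-negative on $\Gamma_2\cap B(x^0,aR)$ by hypothesis (\ref{(x-x^0)dotl}); hence $\frac{\partial w}{\partial\ell}\le 0$ there. The boundary condition (\ref{w<0}) is immediate, since $|x-x^0|=aR$ on $\partial B(x^0,aR)$ makes the two terms in $w$ cancel exactly. Finally, inside $B(x^0,R)$ we have $|x-x^0|\le R$, so $R^s|x-x^0|^{-s}\ge 1$ and hence $w(x)\ge\alpha^s(1-a^{-s})=\beta_0$, which is (\ref{w>eta}) with the claimed constant. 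The only mildly geometric point worth being careful about is the passage from $|x-x^0|\ge\alpha R$ on $\D$ to the corresponding bound on $\Gamma_1$, done via closures; beyond that, the proof is purely computational.
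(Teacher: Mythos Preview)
Your proof is correct and follows essentially the same route as the paper: both verify the five conditions of Definition~\ref{strict_growth} by direct substitution, invoking Proposition~\ref{subsol} for sub-ellipticity, using $|x-x^0|\ge\alpha R$ on $\Gamma_1$, the hypothesis \eqref{(x-x^0)dotl} for the oblique derivative, and the obvious radial bounds on $\partial B(x^0,aR)$ and inside $B(x^0,R)$. Your version is slightly more careful in two places (noting explicitly that the singularity lies outside $\D\cap B(x^0,aR)$, and passing to the closure to get $|y-x^0|\ge\alpha R$ on $\Gamma_1$), but there is no substantive difference.
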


\begin{proof} Let us check each conditions in the Definition \ref{strict_growth}:
\
\begin{enumerate}

\item condition (\ref{Lw<0}) follows by Proposition \ref{subsol} ;

\item on $\Gamma_1\cap B(x^0,aR)$, we have $|x-x^0| > \alpha R$, so 
$$
w(x) \le \alpha^s R^s(\alpha R)^{-s} -\frac{\alpha^s}{a^s} = 1- \frac{\alpha^s}{a^s} \le 1,
$$ 
and (\ref{w<1}) follows;

\item   on $ \overline{\D}  \cap \partial B(x^0,aR)$ we have $w(x)= \alpha^s R^s(aR)^{-s} - \frac{\alpha^s}{a^s}=0$, and (\ref{w<0}) follows;

\item by condition \eqref{(x-x^0)dotl} we have 
$$
\frac{\partial w}{ \partial \ell } = -s \alpha^sR^s \frac{(x-x^0) \cdot  \ell }{|x-z|^{s+2}} \le 0,
$$
and (\ref{w_l<0}) follows;

\item
for $|x-x^0| \le R$ we have
\begin{equation} \label{beta-0}
w(x) \ge \alpha^s R^sR^{-s} - \frac{\alpha^s}{a^s}=\alpha^s(1 - \frac{1}{a^s}) >0,
\end{equation}
and (\ref{w>eta}) follows.
\end{enumerate}
\end{proof}

\section{ Growth lemma for admissible  domains} \label{growth-admissible}

\begin{definition}\label{adm}
Let unbounded domain $\tilde\D$ in $\R^n$ be $x_1$-oriented, and let $\tau_j \to +\infty$ be corresponding increasing sequence.

Suppose that  $\Gamma_2=\partial \tilde\D$  satisfies the inner cone condition and the vector field $\ell$ on $\Gamma_2$ satisfies the same condition 
as in Proposition \ref{nad}.   

We say that $\tilde\D$  is  {\bf admissible} w.r.t.  $\ell$ if there exist $\rho\le1$, $a>1$, $N_0\in\N$ s.t. for all sufficiently large $j\in\N$ the 
following conditions hold: \medskip
 
Set 
$R_j:=  \rho(\tau_{j+1}-\tau_{j})$. \medskip

$A)$ There is a point $z^0 \in \tilde\D(j)$ s.t. $B(z^0,aR_j) \cap \Gamma_2 = \emptyset$.\medskip

For any point $\xi\in\tilde\D(j)$, there exists a finite sequence of points $ z^i \in \tilde\D(j)$,
$i=1,\dots,N$, $N\le N_0$, such that\medskip

$B)$ $\xi \in B(z^N,R_j)$, and the intersection $B(z^i,R_j) \cap B(z^{i+1},R_j)\cap \tilde\D$ contains a ball ${ B}_{qR_j}$ of radius $ qR_j$, for some $0<q<a/4$;\medskip

$C)$ for any $x \in B(z^i,aR_j)\cap  \Gamma_2$,  $i=1,\dots, N$, we have $(x-z^{i})\cdot \ell \geq 0$.
\end{definition}
Without loss of generality we assume forth-forward that $a<4$ in above.

\begin{remark}

Conditions in admissibility relate only to the Neumann boundary of the domain, and can be interpreted as follows:

 (A): domain is  wide enough; 
 
 (B): domain is not too wide; 
 
 (C): boundary $\Gamma_2$  is regular.

\end{remark}

It is easy to see that if $\tilde\D$ is a convex cylinder with smooth boundary, and $\ell={\bf n}(x)$ is outward normal to $\Gamma_2=\partial\tilde\D$ 
at the point $x$, then $\tilde\D$ is admissible for $\tau_j=j$. Also, if $\tilde\D$ is a convex acute cone with smooth boundary, and $\ell={\bf n}(x)$, 
then $\tilde\D$ is admissible for $\tau_j=2^j$. 
 
We consider a generalization of these examples. 

Let $\Omega\subset \mathbb R^{n-1}$ is convex (not necessary smooth), and
\begin{equation}\label{nardom}
\tilde\D=\{x=(x_1,\bar x  f(x_1)), \quad \bar x \in \Omega,\ x_1>1 \},
\end{equation}
where $f$ is a positive function s.t.
\begin{equation}\label{subexp}
\lim_{x_1\to \infty} f'(x_1)=0. 
\end{equation}
Assume that exists  of monotonically increasing sequence  
 $\tau_j \to \infty$ s.t. 
\begin{equation}\label{feqvR}
 \ C^{-1}R_j< f(x_1)<CR_j \quad \text{for} \ x_1\in[\tau_{j-1},\tau_{j+1}], \ \text{ uniformly in    } j\in\N. 
\end{equation}

{\bf Example 1}. Let $f$ be regularly varying at infinity with index $\alpha<1$, see \cite{seneta}. This means
\begin{equation}\label{RVF}
f(t)=t^{\alpha}\phi(t),\quad \alpha<1, \quad \frac {t\phi'(t)}{\phi(t)}\to0\ \ \text{as}\ \ t\to+\infty.
\end{equation}
Set $\tau_j=F(j)$ where $F$ is inverse function to $\frac t{f(t)}$ (by (\ref{RVF}) $\frac t{f(t)}$ increases for sufficiently large $t$).
We claim that conditions \eqref{subexp}-\eqref{feqvR} are satisfied. 

Indeed, (\ref{subexp}) evidently follows from (\ref{RVF}). Further, for $j \ge 2$ we have
\begin{equation}\label{R1} 
R_j=\rho(\tau_{j+1}-\tau_j)= \rho\int\limits_j^{j+1}F'(\theta)\,d\theta=\rho\int\limits_j^{j+1}\frac {d\theta}{\big(\frac t{f(t)}\big)'\big|_{t=F(\theta)}}.
\end{equation} 
 By (\ref{RVF}) we derive
 $$
 \Big(\frac t{f(t)}\Big)'=\frac 1{f(t)}\cdot\Big[1-\alpha-\frac {t\phi'(t)}{\phi(t)}\Big]\sim\frac {1-\alpha}{f(t)} \quad \text{as}\ \ t\to+\infty.
 $$
Thus (\ref{R1}) gives for sufficiently large $j$
$$
R_j\sim \frac {\rho}{1-\alpha}\cdot f(F(\tilde\theta)),\qquad \tilde\theta\in[j,j+1],
$$
 and the claim follows.\medskip

\begin{lemma}\label{convex_prop}
Let $\Omega\subset \mathbb R^{n-1}$ be convex and let $\tilde\D$ be as in \eqref{nardom}. Suppose conditions \eqref{subexp}-\eqref{feqvR} are satisfied. 
Let  
$$
\ell(x)={\bf n}(x)\equiv\frac{(-f'(x_1),{\bf n}_{\bar{x}})}{\sqrt{1+f'(x_1)^2}} \qquad \text{for}\quad x=(x_1,\bar x  f(x_1)), \ \bar x \in \partial\Omega,
$$
where ${\bf n}_{\bar x}$ is outward normal to a supporting plane to $\Omega$ at $\bar x$.

 Then domain $\tilde\D$ is admissible. 
\end{lemma}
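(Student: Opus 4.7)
Fix $\bar z^*\in\Omega$ and $\delta_0>0$ with $B(\bar z^*,2\delta_0)\subset\Omega$, put $d:=\diam(\Omega)$, and set $\epsilon_j:=\sup_{t\ge\tau_{j-1}}|f'(t)|$, so $\epsilon_j\to0$ by \eqref{subexp}. For any $\bar\eta\in\partial\Omega$, convexity of $\Omega$ provides an $(n-1)$-dimensional inner cone in $\Omega$ at $\bar\eta$ with axis $(\bar z^*-\bar\eta)/|\bar z^*-\bar\eta|$, uniform angle $\ge 2\arcsin(\delta_0/d)$, and height $\ge\delta_0$. Scaling this cone by $f(x_1)$ in the cross-section and extending slightly in the $x_1$-direction—the cross-section varies by only $O(\epsilon_j R_j)$ over a slab of thickness $O(R_j)$, negligible compared with $f(\tau_j)\sim R_j$—yields an $n$-dimensional inner cone in $\tilde\D$ at every $y\in\Gamma_2$ with $y_1$ large, with uniformly bounded angle and height. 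Uniform non-tangentiality of $\ell$ is similar: as $\epsilon_j\to 0$, $\ell\to(\mathbf{0},{\bf n}_{\bar\eta})$, and by convexity $(\bar z^*-\bar\eta)\cdot(-{\bf n}_{\bar\eta})\ge 2\delta_0$, so the projection of $\ell$ on the inward cone axis is bounded below.

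\textbf{Conditions (A), (B), (C).}
Let $C$ be the constant in \eqref{feqvR}; we may assume $C<\delta_0$ (by refining the sequence $\tau_j$ so that $f$ is nearly constant on each $[\tau_{j-1},\tau_{j+1}]$), and pick $a\in(1,\delta_0/C)$. Then $z^0:=(\tau_j,f(\tau_j)\bar z^*)$ has horizontal distance at least $2\delta_0 f(\tau_j)-O(\epsilon_j R_j)$ from $\Gamma_2$, which exceeds $aR_j$ for large $j$, giving (A). For (B) and (C), fix $\delta_1\in(0,2\delta_0/(dC))$, and for any $\xi=(\tau_j,f(\tau_j)\bar\eta)\in\tilde\D(j)$ set $\bar\zeta^N:=(1-t_*)\bar\eta+t_*\bar z^*$ with $t_*:=\delta_1/(2\delta_0)$; by convexity $\bar\zeta^N\in\Omega_{\delta_1}:=\{\bar z\in\Omega:\dist(\bar z,\partial\Omega)\ge\delta_1\}$, and $|\xi-z^N|\le t_* f(\tau_j)\, d\le R_j$. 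Discretize the segment $[\bar z^*,\bar\zeta^N]\subset\Omega$ with spacing $R_j/(2f(\tau_j))$ to obtain $\bar\zeta^0=\bar z^*,\bar\zeta^1,\dots,\bar\zeta^N$ with $N\le N_0:=\lceil 2dC\rceil$; concavity of $\dist(\cdot,\partial\Omega)$ on a convex set keeps every $\bar\zeta^i$ in $\Omega_{\delta_1}$. Setting $z^i:=(\tau_j,f(\tau_j)\bar\zeta^i)$, the midpoint of each consecutive pair is at horizontal distance $\ge\delta_1 f(\tau_j)$ from $\Gamma_2$, so $B(z^i,R_j)\cap B(z^{i+1},R_j)\cap\tilde\D$ contains a ball of radius $qR_j$ with $q:=\min\{3/4,\delta_1/(2C)\}$, proving (B). For (C), take $x=(x_1,\bar y f(x_1))\in\Gamma_2\cap B(z^i,aR_j)$, $\bar y\in\partial\Omega$, write $f(x_1)=f(\tau_j)+(x_1-\tau_j)f'(\theta)$, and compute
\begin{equation*}
(x-z^i)\cdot\ell=\frac{f(\tau_j)(\bar y-\bar\zeta^i)\cdot{\bf n}_{\bar y}+(x_1-\tau_j)\big(-f'(x_1)+f'(\theta)\bar y\cdot{\bf n}_{\bar y}\big)}{\sqrt{1+f'(x_1)^2}}.
\end{equation*}
Since $\bar\zeta^i\in\Omega_{\delta_1}$ and ${\bf n}_{\bar y}$ is outward normal to a supporting hyperplane to $\Omega$ at $\bar y$, convexity gives $(\bar y-\bar\zeta^i)\cdot{\bf n}_{\bar y}\ge\delta_1$, so the first summand is $\ge f(\tau_j)\delta_1\ge C^{-1}\delta_1 R_j$; the second is bounded in modulus by $aR_j\epsilon_j(1+d)$. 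Since $\epsilon_j\to 0$, (C) holds for all sufficiently large $j$.

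\textbf{Main obstacle.}
The delicate point is the simultaneous calibration of $\rho$, $a$, $\delta_1$, $C$: $\rho$ must be small enough that $f(\tau_j)/R_j$ dominates $a/\delta_0$ (needed for (A) and to kill the $\epsilon_j$-error in (C)), yet not so small that the chain length $N_0\sim C$ explodes in (B); $\delta_1$ must be small enough that $\bar\zeta^N$ stays within $R_j$ of an arbitrary $\xi$, yet large enough that the convexity lower bound in (C) survives the $\epsilon_j$-correction; and the sequence $\tau_j$ may need to be refined so that $C<\delta_0$, which is essential to pick $a>1$. These constraints reconcile because the geometric constants $\delta_0,d$ depend only on $\Omega$, $C$ is fixed after refinement, and $\epsilon_j\to 0$ as $j\to\infty$.
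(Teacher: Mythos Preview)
Your proof follows essentially the same approach as the paper's: verify conditions (A), (B), (C) of Definition~\ref{adm} directly, using convexity of $\Omega$ for the key lower bound $(\bar y-\bar\zeta^i)\cdot{\bf n}_{\bar y}\ge\delta_1$ in (C), and the smallness of $f'$ at infinity to absorb the error terms. Your computation for (C) is in fact the same as the paper's (the paper writes the identity with $\bar z^i$ in place of your $\bar\zeta^i$ and splits the cross term slightly differently, but the estimate is identical). You are more explicit than the paper on (A)--(B) and on the inner cone condition, which the paper passes over in a single sentence.

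One genuine slip: the sentence ``we may assume $C<\delta_0$ (by refining the sequence $\tau_j$ so that $f$ is nearly constant on each $[\tau_{j-1},\tau_{j+1}]$)'' does not work as stated. Inserting more points into $(\tau_j)$ makes $\tau_{j+1}-\tau_j$ smaller while $f$ stays fixed, so the ratio $f/R_j$ \emph{increases} and the constant $C$ in \eqref{feqvR} must get \emph{larger}, not smaller; since moreover $C\ge 1$ always while $\delta_0$ may be tiny, $C<\delta_0$ is not achievable in general. The correct mechanism is the free parameter $\rho\le 1$ in Definition~\ref{adm}: replacing $\rho$ by $\rho/M$ shrinks $R_j$ by a factor $M$, so that (A) becomes $a<2M\delta_0 C^{-1}$, which admits $a>1$ once $M$ is large; the chain length in (B) then grows to $N_0\sim dCM$, still uniformly bounded in $j$, and your argument for (C) is unaffected since the main term $f(\tau_j)\delta_1$ is independent of $\rho$. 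With this adjustment your proof is complete.
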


\begin{proof}
Properties (A-B) follow from \eqref{nardom} and \eqref{feqvR}. To verify (C) one can assume without loss of generality that for 
$z^i\in\tilde\D(j)$ we have ${\rm dist}({\bar z}^i,\partial \Omega)\ge \delta_0$ for some $\delta_0$ depending only on $\Omega$. 
Then for $x\in B(z^i,aR_j)\cap \partial \tilde\D$ we have
\begin{eqnarray*}
(x-z^i)\cdot \ell\sqrt{1+f'(x_1)^2}&=&-(x_1-z_1^i)f'(x_1)+(\bar{x}f(x_1)-\bar{z}^if(\tau_j))\cdot {\bf n}_{\bar{x}}\\
&=&-(x_1-z_1^i)f'(x_1)+(f(x_1)-f(\tau_j))\bar{z}^i\cdot {\bf n}_{\bar{x}}+f(x_1)(\bar{x}-\bar{z}^i)\cdot {\bf n}_{\bar{x}}.
\end{eqnarray*}
The last term can be estimated from below by convexity of $\Omega$, and we obtain for some $\delta_1>0$
\begin{equation*}
(x-z^i)\cdot \ell\sqrt{1+f'(x_1)^2}\geq f(x_1)\delta_0\delta_1-|{\bar z}^i|\max_{\tau_{j-1}\leq t \leq \tau_{j+1}}|f'(t)|\,CR_j
-\max_{\tau_{j-1}\leq t \leq \tau_{j+1}}|f'(t)|\,aR_j.
\end{equation*}
Finally, due to \eqref{feqvR} and \eqref{subexp} we have for $j$ large enough
\begin{equation*}
(x-z^i)\cdot \ell\geq \frac {f(x_1)}{\sqrt{1+f'(x_1)^2}}\cdot\Big[\delta_0\delta_1-C\cdot({\rm diam}(\Omega)+a)\max_{\tau_{j-1}\leq t \leq \tau_{j+1}}|f'(t)|\Big]>0,
\end{equation*}
and the Lemma follows.
\end{proof}

\begin{definition}\label{tilde_D}
Let $\tilde \D\subset \R^n$ be unbounded domain admissible w.r.t. vector field $\ell$, and $\Gamma_2=\partial \tilde \D$. Let $G\subset \tilde \D$ be a closed set.
Denote by $\D$ the connected component of $\tilde \D\setminus G$ adjacent to $\Gamma_2$ and put $\Gamma_1=\partial \D\setminus\Gamma_2$. 
Clearly $\Gamma_1$ and $\Gamma_2$ can have common points only at infinity. Such domains $\D$ will be also called {\bf admissible}.\medskip
\end{definition}

The following Growth lemma for the mixed BVP in $\D$ is the key ingredient in the proof of the Phragm\'en-Lindel\"of dichotomy.

\begin{lemma}\label{mixed-lm-2}
Assume as in Definition \ref{tilde_D} that $\D\subset \tilde{\D}$ is admissible w.r.t. vector field $\ell$. Let $s_j= e(j)-2$ and let for some $\varkappa_j>0$
\begin{equation}\label{cap-cond}
 \mathbf{C}_{s_j}(  B(z^0,R_j) \setminus\D) \ge \varkappa_j \mathbf{C}_{s_j}(\tilde{\D}(j-1,j+1)\setminus\D)
\end{equation}
(recall that balls $B(z^i,R_j)$ are introduced in Definition \ref{adm}, and ${\bf C}_s$ stands for the $s$-capacity).

Suppose that 
\begin{equation}\label{subell}
\oL u \le 0\quad \text{in}\ \ \D,\qquad u>0\quad  \text{in}\ \  \D,\qquad u \le 0 \quad \text{on}\ \ \Gamma_1, 
\qquad \frac{\partial u}{\partial \ell} \le 0 \quad \text{on}\ \ \Gamma_2. 
\end{equation}
Then for all sufficiently large $j\in\N$
\begin{equation}\label{M>m}
\sup_{ \D(j-1,j+1)} u \ge  \frac{\sup_{ \D(j)} u}{1 - \kappa_j \mathbf{C}_{s_j}(H_j)R_j^{-s_j}}.
\end{equation}
Here $H_j=\tilde{\D}(j-1,j+1)\setminus\D$,
\begin{equation}\label{kapaj}
\kappa_j=  \frac{C}{2^{N_0}}  \left(\frac{2q^{N_0}}{a}\right)^{s_j}\varkappa_j,
\end{equation}
$a$, $q$ and $N_0$ are the constants from Definition \ref{adm} . 
\end{lemma}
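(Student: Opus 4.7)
The plan is to combine a Landis-type growth estimate at $z^0$---where, by admissibility condition (A), the ball $B(z^0, aR_j)$ avoids $\Gamma_2$ so the pure-Dirichlet Proposition \ref{Land} applies---with an iterated chain argument along $z^0, z^1, \dots, z^N$ from admissibility (B), using Lemma \ref{growth-one} together with the barrier from Lemma \ref{barrier-2} at each intermediate link.

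First I would pick $\xi_* \in \D(j)$ with $u(\xi_*)$ close to $\sup_{\D(j)} u$ and invoke (B) to produce a chain $z^0, \dots, z^N$, $N \le N_0$, with $\xi_* \in B(z^N, R_j)$, whose consecutive balls $B(z^i, R_j)$ and $B(z^{i+1}, R_j)$ share a common ball of radius $qR_j$ lying in $\tilde\D \setminus \D$, so it plays the role of the hole required in Lemma \ref{barrier-2} with $\alpha = q$; condition (C) supplies exactly the inequality $(x - z^i)\cdot \ell \ge 0$ on $\Gamma_2 \cap B(z^i, aR_j)$ that lemma needs. At each link $i \ge 1$, Lemma \ref{barrier-2} thus produces a barrier with $\beta_0^{(i)} = q^{s_j}(1 - a^{-s_j})$, and iterating Lemma \ref{growth-one} through the overlapping balls of the chain propagates the sup of $u$ near $\xi_*$ back to a comparable sup on $B(z^0, aR_j) \cap \D$, losing a multiplicative factor of order $q^{N_0 s_j}$ over the $N \le N_0$ steps.

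At $z^0$, Proposition \ref{Land} yields $\sup_{\D \cap B(z^0, aR_j)} u \ge (1 - \eta_1(s_j) \mathbf{C}_{s_j}(B(z^0, aR_j/4)\setminus \D)\, R_j^{-s_j})^{-1}\,\sup_{\D \cap B(z^0, aR_j/4)} u$; after a constant-factor adjustment matching the radius $aR_j/4$ against $R_j$, the capacity hypothesis \eqref{cap-cond} bounds the capacity below by $c\,\varkappa_j\,\mathbf{C}_{s_j}(H_j)$. Composing this Landis-type lower bound with the multiplicative chain loss produces \eqref{M>m}; the constant $\kappa_j$ in \eqref{kapaj} then collects the factor $(2/a)^{s_j}$ from $\eta_1$, $q^{N_0 s_j}$ from the chain, $\varkappa_j$ from \eqref{cap-cond}, and the combinatorial factor $C/2^{N_0}$ absorbing the splicing losses at the $N_0$ chain overlaps.

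The main obstacle I expect is the chain-propagation bookkeeping: compounding $N \le N_0$ one-step applications of Lemma \ref{growth-one} into a single multiplicative growth bound with the correct dependence on $q$, $a$, $s_j$, and $N_0$. A cleaner implementation is to build a \emph{single} global barrier $w$ on $\D \cap \bigcup_i B(z^i, aR_j)$ by splicing the local Lemma \ref{barrier-2} barriers through the shared $qR_j$-balls, together with, at $z^0$, a capacity-based Landis barrier (constructed from a measure $\mu \in \mathcal{M}(B(z^0, R_j) \setminus \D)$ nearly realizing the capacity in \eqref{cap-cond}), and then apply Proposition \ref{comparison_theorem} once to $u$ and $\sup_{\D(j-1,j+1)} u \cdot (1 - w)$. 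This migrates the difficulty into verifying that $w$ is sub-elliptic globally, satisfies $w \le 1$ on $\Gamma_1$ and $\partial w/\partial \ell \le 0$ on $\Gamma_2$ (the latter relying on (C)), and achieves the required pointwise lower bound $w \ge \kappa_j \mathbf{C}_{s_j}(H_j) R_j^{-s_j}$ on $\D(j)$.
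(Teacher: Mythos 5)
Your proposal identifies the right ingredients (Proposition \ref{Land} at $z^0$ via condition (A) and \eqref{cap-cond}, the chain $z^0,\dots,z^N$ from (B), Lemmas \ref{barrier-2}--\ref{growth-one} with condition (C) near $\Gamma_2$), but the mechanism you describe for the chain step does not work, for two reasons. First, you misread condition (B): the ball of radius $qR_j$ lies in $B(z^i,R_j)\cap B(z^{i+1},R_j)\cap\tilde\D$, \emph{not} in $\tilde\D\setminus\D$. In general it sits inside $\D$, where $u>0$, so it is not a ``hole'' of the complement and cannot play the role of the empty ball $B(x^0,\alpha R)\cap\D=\emptyset$ required by Lemma \ref{barrier-2}, nor contribute capacity for Proposition \ref{Land} applied to $u$ itself. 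In the paper's argument this overlap ball is used quite differently: one shows inductively that a \emph{shifted} function $u_i=u-m(1-\zeta_0\tau^{i-1})$ is non-positive on $B^{i-1}\cap\Omega$, and it is the resulting set $\{u_i\le 0\}\supset B_{qR_j}$ that serves as the hole when the growth lemma is applied to $u_i$ on its positivity set.

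Second, and more fundamentally, the quantitative shape of your chain transfer is wrong. An unconditional statement of the form ``$\sup_{B(z^0,aR_j)\cap\D}u\ge c\,q^{N_0 s_j}\sup_{\D(j)}u$'' composed with the Landis estimate at $z^0$ yields only $M\ge c\,q^{N_0 s_j}\,m/\bigl(1-\varkappa_j\eta_1\mathbf{C}_{s_j}(H_j)R_j^{-s_j}\bigr)$, which is weaker than the trivial bound $M\ge m$ and nowhere near \eqref{M>m}: the factor $q^{N_0 s_j}$ must end up \emph{inside} the small correction $\kappa_j\mathbf{C}_{s_j}(H_j)R_j^{-s_j}$, not multiplying $m$. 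This is achieved in the paper by a level-subtraction dichotomy rather than a multiplicative transfer: at step $i$ either $u$ exceeds the level $m(1-\zeta_0\tau^{i})$ somewhere in $B^i$ --- in which case a single application of Proposition \ref{Land} (case (a)) or of Lemmas \ref{barrier-2} and \ref{growth-one} (case (b)) already produces $M\ge m\bigl(1+\zeta_0\tau^{i}/(1-2\tau)\bigr)$ and the proof ends --- or $u$ stays below that level on all of $B^i\cap\Omega$ and one moves to $B^{i+1}$; after at most $N_0$ steps the second alternative contradicts $u(\xi)=m$ with $\xi\in B^N$. The geometric decay $\tau^{N_0}$ of the levels, with $\zeta_0\sim\varkappa_j\eta_1\mathbf{C}_{s_j}(H_j)R_j^{-s_j}$ and $\tau\sim\eta_1 q^{s_j}$, is exactly what produces the constant \eqref{kapaj}. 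Your alternative ``single spliced barrier'' suffers from the same defect and additionally leaves unaddressed the sign of the spliced potential on the lateral boundary $\D(j\pm1)$ (condition \eqref{w<0}) and the Neumann condition \eqref{w_l<0} away from the centers $z^i$, so it does not repair the argument.
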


\begin{figure}[ht]
\begin{center}
\advance\leftskip-3cm
\advance\rightskip-3cm
\includegraphics[scale=0.5]{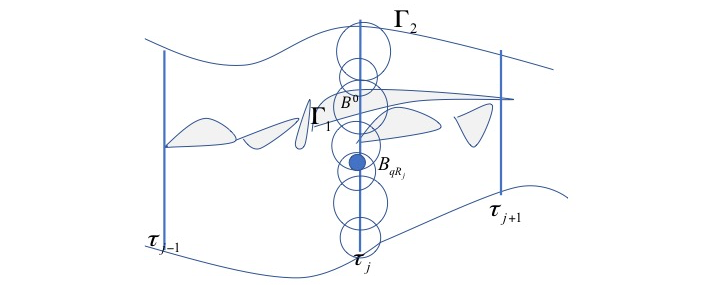}
\caption{Illustration to the Lemma \ref{mixed-lm-2}}
\label{Growthlemalayer}
\end{center}\end{figure}

\begin{proof} Let $j$ be so large that the assumptions (A-C) from Definition \ref{adm} are satisfied. For the sake of brevity we put
$\Omega=\D(j-1,j+1)$, $s=s_j$, $R=R_j$, $B^i=B(z^i,R_j)$, $\varkappa=\varkappa_j$, $H=H_j$, and 
$$
M=\sup_{\Omega } u , \qquad m=\sup_{\D(j)} u=u(\xi)
$$ 
(here $\xi\in \overline \D(j)$). 

We proceed similarly to the proof of \cite[Lemma 4.2]{AI-Naz}.
By assumption (B), $\xi\in B^N$ for some $N\le N_0$.
Consider the ball $B^0$ and the ball  $B(z_0,aR)$, $a>1$, concentric to it. Due to assumptions  (A) and (\ref{cap-cond}) 
we can apply  Lemma \ref{Land} to get  
\begin{equation*}\label{B1}
M:=\sup_{\Omega} u \geq \sup_{\Omega\cap B(z_0,a R)}u \geq \frac{\sup_{B^0\cap \Omega} u}{1-\varkappa\eta_1 {\bf C}_s(H)R^{-s}} 
\end{equation*}  
($\eta_1$ is defined in (\ref{eta-1})).

Suppose that
\begin{equation} \label{zeta0}
 \sup_{\Omega\cap B^0} u \geq m(1-\zeta_0), \qquad \text{where} \quad \zeta_0 = \frac{\varkappa\eta_1 {\bf C}_s(H)R^{-s}}{2-\varkappa\eta_1 {\bf C}_s(H)R^{-s}}. 
\end{equation}
Then we get 
\begin{equation*}\label{B11}
M \geq  \frac{m}{1-\varkappa\eta_2 {\bf C}_s(H)R^{-s}}
\end{equation*} 
for $\eta_2=\frac{\eta_1}{2}$,  and the statement follows. 

If \eqref{zeta0} does not hold, we consider the function 
\begin{equation}\label{func_v1}
u_1(x)=u(x)-m(1-\zeta_0),\qquad u_1(x)\leq 0 \quad \text{in}\ \ \Omega \cap B^0. 
\end{equation}

Let $\Omega_1:=\{ x: u_1 (x)>0\}$.  Assume that $B^1\cap \Omega_1\not= \emptyset$, otherwise we consider the first ball $B^i$ for which 
this property holds.

Suppose that 
\begin{equation}\label{tau}
 \sup_{B^1\cap \Omega} u_1 \geq m\zeta_0(1-\tau), 
\end{equation}
where the constant $\tau$ will be chosen later.
Consider any simply connected component of the domain $B(z^1,R)\cap \Omega_1$ in which the supremum in (\ref{tau}) is realised.
There are two possibilities:

a) $B(z^1,aR)\cap \Gamma_2 = \emptyset$; 

b) $B(z^1,aR)\cap \Gamma_2 \ne \emptyset$. 


Let us start with case (a). By assumption (B),  $B^0 \cap \Omega \cap B^1$ contains a ball of radius $q R$. 
Due to Proposition \ref{Land} and (\ref{tau}) it follows that
\begin{equation}\label{func_v11}
\sup_{B(z^1,aR)\cap \Omega} u_1\ge  \frac {\sup_{B^1\cap \Omega} u_1}{1-\eta_3}\ge \frac{m\zeta_0(1-\tau)}{1-\eta_3},
\end{equation}
where $\eta_3=\eta_1q^s$.
 
Using  (\ref{func_v1}) and \eqref{func_v11} we deduce
\begin{equation*}\label{u-tau1}
 \sup_{B(z^1,aR)\cap \Omega}u \geq m\,\Big(1+\frac {\zeta_0(\eta_3-\tau)}{1-\eta_3}\Big).  
\end{equation*}
Letting $\tau=\frac{\eta_3}2$ we get 
\begin{equation}\label{u-eta4}
M \geq  \sup_{B(\xi_1,aR)\cap \Omega}u \geq m\,\Big(1+\frac {\zeta_0\tau}{1-2\tau}\Big),  
\end{equation}
and the statement follows.

In case of (b), recalling the assumption (C) we proceed with the same arguments but apply Lemmas \ref{barrier-2} and \ref{growth-one} instead of Proposition \ref{Land} 
and put $\tau=\frac{\beta_0}2$. Thus, if the relation (\ref{tau}) holds with $\tau=\frac 12\min\{\eta_3,\beta_0\}$ then (\ref{u-eta4}) is satisfied in any case, and 
Lemma is proved.

If \eqref{tau} does not hold then function $u$ satisfies
\begin{equation*}\label{u-tau}
 \sup_{B^1\cap \Omega}u \leq m(1-\zeta_0\tau). 
\end{equation*}
As in previous step we consider the function
\begin{equation*}\label{func_v2}
u_2(x)=u(x)- m(1-\zeta_0\tau),\qquad u_2(x)\leq 0 \quad \text{in}\ \ \Omega \cap B^1.
\end{equation*}

Repeating previous argument we deduce that if
\begin{equation} \label{v-2}
\sup_{B^2\cap \Omega} u_2 \ge m\zeta_0 \tau(1 -\tau)
\end{equation}
then
$$
M \geq m\big(1+\frac {\zeta_0\tau^2}{1-2\tau}\big),  
$$
and Lemma is proved.

If \eqref{v-2} does not hold, then 
$$
 \sup_{B^2\cap \Omega}u \leq m(1-\zeta_0\tau^2). 
$$
Repeating this process we either prove Lemma or arrive at the inequality
$$
 \sup_{B^N\cap \Omega}u \leq m(1-\zeta_0\tau^N) 
$$
that is impossible since $\xi\in B^N$ and $u(\xi)=m$.
\end{proof}

\begin{remark}
Sometimes it is more convenient to use the following corollary of \eqref{M>m}:
\begin{equation}\label{mod_M>m}
\sup_{ \D(j-1,j+1)} u \ge \sup_{ \D(j)} u \cdot(1 + \kappa_j \mathbf{C}_{s_j}(H_j)R^{-s_j}). 
\end{equation}

\end{remark}

\section{Dichotomy of solutions} \label{dichotomy}

In this section we will apply the Growth Lemma for admissible domain obtained in the previous section to prove dichotomy of solutions at infinity. 
Let domain $\D\subset\tilde \D$ be admissible.  Denote $M(\tau)=\sup\limits_{ x_1=\tau} u$. In particular, $M(\tau_j)=\sup\limits_{ \D(j)} u$.

We start with the following elementary consequence of the maximum principle.

\begin{proposition}\label{mon}
Suppose that $u$ is subject to (\ref{subell}). Then
\begin{itemize}
\item
either there is $\tau^*\ge\tau_1$ s.t.   for $\tau_{j+1}>\tau_j>\tau^*$ we have $M(\tau_{j+1})>M(\tau_j)$;
\item
or for all  $\tau_j>\tau_1$ we have $M(\tau_{j+1})<M(\tau_j)$.
\end{itemize}

\end{proposition}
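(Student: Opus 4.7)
\medskip
\noindent\textbf{Proof plan.} The plan is to show that the sequence $M_j:=M(\tau_j)$ is strictly quasi-convex, in the sense that
$$
M_\ell < \max\{M_k,M_j\} \qquad \text{whenever } k<\ell<j,
$$
and then extract the dichotomy as a short combinatorial consequence. The non-strict version is a direct application of Proposition \ref{comparison_theorem} on the finite layer $\D(k,j)$ (bounded, since admissibility forces the cross-sections to be bounded): splitting $\partial\D(k,j)$ into a ``Dirichlet-type'' part $(\Gamma_1\cap\partial\D(k,j))\cup\overline{\D(k)}\cup\overline{\D(j)}$ and a ``Neumann'' part $\Gamma_2\cap\partial\D(k,j)$, and comparing $u$ to the constant $v\equiv\max\{M_k,M_j\}$, one has $\oL u\le 0=\oL v$, $u\le v$ on the Dirichlet-type part (using $u\le 0$ on $\Gamma_1$ and the definitions of $M_k,M_j$), and $\partial_\ell u\le 0=\partial_\ell v$ on the Neumann part. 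Hence $u\le v$ throughout $\D(k,j)$, which after taking the sup over $\D(\ell)$ yields the claim.

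To upgrade this to the strict inequality, I argue by contradiction: if $M_\ell=\max\{M_k,M_j\}$, then $u$ attains its supremum over $\overline{\D(k,j)}$ at some $x^*\in\overline{\D(\ell)}$. Note $M_\ell>0$ since $u>0$ in $\D$. If $x^*\in\Gamma_1$, then $u(x^*)\le 0$, contradicting $M_\ell>0$. If $x^*\in\Gamma_2$, Proposition \ref{nad} applied to $u$ in $\D(k,j)$ supplies points near $x^*$ with $\partial_\ell u>0$, contradicting the Neumann condition in \eqref{subell}. If $x^*$ lies in the interior of $\D$, the strong maximum principle (applicable to $W^2_{n,loc}$ sub-elliptic functions via Aleksandrov--Bakel'man) forces $u$ to be constant on the connected component of $\D(k,j)$ containing $x^*$; propagating this constancy to the closure, which meets $\Gamma_1$ by connectivity of $\D$ and the structure of the admissible domain, gives $M_\ell\le 0$, again a contradiction.

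Once strict quasi-convexity is established, the dichotomy is purely elementary. If $M_{j+1}<M_j$ for every $j$ with $\tau_j>\tau_1$, we are in the second alternative. Otherwise, let $j_0$ be the smallest index with $M_{j_0+1}\ge M_{j_0}$; applying strict quasi-convexity to $(j_0,j_0+1,j_0+2)$ yields $M_{j_0+1}<\max\{M_{j_0},M_{j_0+2}\}$, and since $M_{j_0}\le M_{j_0+1}$ this forces $M_{j_0+2}>M_{j_0+1}$. A one-step induction on the triple $(j-1,j,j+1)$ for $j\ge j_0+1$ then shows $M_{j+1}>M_j$ for all $j\ge j_0+1$, so $\tau^*:=\tau_{j_0}$ places us in the first alternative. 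The main obstacle is the interior case of the strict quasi-convexity argument: tracking the connected component of $\D(k,j)$ on which $u$ is forced to be constant and ensuring it reaches $\Gamma_1$ requires invoking the global connectedness of $\D$ as the component of $\tilde\D\setminus G$ adjacent to $\Gamma_2$ from Definition \ref{tilde_D}, which is where the admissibility hypothesis does its essential work.
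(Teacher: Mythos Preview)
The paper does not actually prove Proposition~\ref{mon}; it is stated without argument as an ``elementary consequence of the maximum principle.'' Your route---establish strict quasi-convexity of $(M_j)$ via comparison with a constant plus a strong-maximum-principle/Nadirashvili upgrade, then read off the dichotomy combinatorially---is the natural one, and the combinatorial step is carried out cleanly.

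The one genuine soft spot is the strictness upgrade. Both the $\Gamma_2$ case and the interior case tacitly assume that $u$ is non-constant on the relevant component of $\D(k,j)$: Proposition~\ref{nad} requires non-constancy, and in the interior case you assert that the closure of the constant component meets $\Gamma_1$, which does not follow from connectivity of $\D$ alone. Nothing in \eqref{subell} forces $\Gamma_1$ to intersect the layer $\overline{\D(k,j)}$; if it does not, the component can be bounded solely by $\Gamma_2$ and the two cross-sections, $u$ is then constant there, $\partial_\ell u=0$ on that piece of $\Gamma_2$, and no contradiction arises. Indeed, under \eqref{subell} alone the proposition is false when $\Gamma_1=\emptyset$, since $u\equiv c>0$ then satisfies all hypotheses while $M_j\equiv c$. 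The easy fix is to propagate: if $u\equiv M_\ell$ on a component of $\D(k,j)$, its trace on $\D(k)$ or $\D(j)$ furnishes interior maxima in the enlarged layer $\D(k-1,j)$ or $\D(k,j+1)$, and iterating you eventually reach a layer meeting $\Gamma_1$ (this uses that $\Gamma_1$ is present in infinitely many layers, guaranteed in the paper's setting by \eqref{cap-cond}) and obtain $M_\ell\le 0$. With that propagation made explicit and the implicit non-triviality of $\Gamma_1$ acknowledged, your proof is complete.
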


\begin{theorem} \label{narrow-thm}
Let the condition \eqref{cap-cond} be satisfied, and
\begin{equation}\label{ser_div}
\sum_{j=1}^{\infty} \kappa_j \mathbf{C}_{s_j}(H_j) \, R_j^{-{s_j}} = \infty
\end{equation}
(we recall that $\kappa_j$ is defined in \eqref{kapaj}, $H_j=\tilde\D(j-1,j+1)\setminus\D$ and $s_j= e(j)-2$).  

Then for any function $u$ subject to (\ref{subell}) we have the following dichotomy:

\begin{enumerate}
 \item 
either  $M(\tau) \to \infty $ as $ \tau \to \infty$ and 
$$ 
\liminf_{N \to \infty}\, \frac{M(\tau_N)}{2^{\sum_{j=1}^N \kappa_j\mathbf{C}_{s_j}(H_j)R_j^{-{s_j}} }} \ge c_1;
$$

\item
 or $M(\tau) \to 0$ as $ \tau \to \infty$ and 
 $$
 \limsup_{N \to \infty} M(\tau_N)\cdot  2^{\sum_{j=1}^N \kappa_j \mathbf{C}_{s_j}(H_j)R_j^{-{s_j}} }  \le c_2.
 $$
\end{enumerate}
Here $c_1$ and $c_2$ are some positive constants.
 
\end{theorem}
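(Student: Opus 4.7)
My plan is to combine the monotonicity dichotomy in Proposition \ref{mon} with an iteration of the Growth Lemma \ref{mixed-lm-2} in the multiplicative form \eqref{mod_M>m}. The preliminary reduction is a maximum-principle observation: since $u$ is sub-elliptic, positive in $\D$, non-positive on $\Gamma_1$, and has oblique derivative $\le 0$ on $\Gamma_2$, Proposition \ref{maxprinc} forbids the supremum over the bounded layer $\D(k,j)$ from being attained on $\Gamma_1$, while Proposition \ref{nad} forbids it from being attained on $\Gamma_2$ (using the inner-cone hypothesis built into admissibility). Consequently
$$
\sup_{\D(k,j)}u=\max\bigl(M(\tau_k),M(\tau_j)\bigr).
$$
Combining this with Proposition \ref{mon}, for all sufficiently large $j$ either $\sup_{\D(j-1,j+1)}u=M(\tau_{j+1})$ (the eventually increasing case) or $\sup_{\D(j-1,j+1)}u=M(\tau_{j-1})$ (the decreasing case), and the same alternative persists once it begins.

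Writing $a_j:=\kappa_j\mathbf{C}_{s_j}(H_j)R_j^{-s_j}$, in the increasing case \eqref{mod_M>m} becomes $M(\tau_{j+1})\ge M(\tau_j)(1+a_j)$. Iterating from the first index $j_0$ past which the alternative is in force yields
$$
M(\tau_N)\ge M(\tau_{j_0})\prod_{j=j_0}^{N-1}(1+a_j).
$$
The elementary inequality $1+x\ge 2^{cx}$, valid for $x$ in any bounded subinterval of $[0,\infty)$, converts this product into $c_1\cdot 2^{\sum_{j=1}^N a_j}$, provided $a_j$ is uniformly bounded — which it is, since $H_j$ sits in a ball of radius of order $R_j$ so that $\mathbf{C}_{s_j}(H_j)R_j^{-s_j}$ stays bounded, while $\kappa_j$ is itself controlled by \eqref{kapaj} together with $\varkappa_j\le 1$. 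The universal constant $c$ can be absorbed into the definition of $\kappa_j$. The divergence hypothesis \eqref{ser_div} then forces $M(\tau_N)\to\infty$ and yields the $\liminf$ bound in case (1). The decreasing case is entirely symmetric: \eqref{mod_M>m} now reads $M(\tau_{j-1})\ge M(\tau_j)(1+a_j)$, so
$$
M(\tau_N)\le M(\tau_{j_0})\prod_{j=j_0}^{N-1}(1+a_j)^{-1},
$$
and the reciprocal form $(1+x)^{-1}\le 2^{-cx}$ combined with \eqref{ser_div} produces $M(\tau_N)\to 0$ together with the $\limsup$ bound in case (2).

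The main obstacle I anticipate is the cross-section identification of the layer supremum. Although conceptually standard, it rests on ruling out that the maximum is attained on the Neumann portion $\Gamma_2$ under only a one-sided bound on $\partial u/\partial\ell$; this requires the full strength of Nadirashvili's lemma together with the inner-cone condition and the angle compatibility of $\ell$ built into admissibility, applied on each bounded sub-layer. A secondary technical point is the bookkeeping of constants when passing from $\prod(1+a_j)$ to $2^{\sum a_j}$, which is clean only because of the uniform boundedness of $a_j$ noted above and can otherwise be absorbed into $c_1$ and $c_2$.
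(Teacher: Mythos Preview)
Your overall strategy coincides with the paper's: invoke Proposition~\ref{mon}, identify $\sup_{\D(j-1,j+1)}u$ with $M(\tau_{j\pm 1})$ according to the two alternatives, iterate \eqref{mod_M>m}, and convert the resulting product into an exponential. The paragraph explaining why the layer supremum is attained on a cross-section is a welcome elaboration of what the paper leaves implicit in the phrase ``elementary consequence of the maximum principle''.

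There is, however, a genuine gap in your constant bookkeeping. You argue only that $a_j:=\kappa_j\mathbf{C}_{s_j}(H_j)R_j^{-s_j}$ is \emph{bounded}, and then invoke $1+x\ge 2^{cx}$ for some $c\le 1$ on that range. But the theorem asserts the precise lower bound $M(\tau_N)\ge c_1\cdot 2^{\sum_{j\le N} a_j}$ with the exact exponent; a bound of the form $M(\tau_N)\ge c_1\cdot 2^{\,c\sum a_j}$ with $c<1$ yields $M(\tau_N)\big/2^{\sum a_j}\ge c_1\cdot 2^{(c-1)\sum a_j}\to 0$ under \eqref{ser_div}, so the stated $\liminf$ inequality would fail. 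You cannot ``absorb $c$ into the definition of $\kappa_j$'', since $\kappa_j$ is fixed by \eqref{kapaj} and appears verbatim in the statement. Moreover, both of your justifications for boundedness are shaky: nothing in \eqref{cap-cond} forces $\varkappa_j\le 1$, and $H_j=\tilde\D(j-1,j+1)\setminus\D$ need not sit in a ball of radius comparable to $R_j=\rho(\tau_{j+1}-\tau_j)$, since no relation between $\tau_j-\tau_{j-1}$ and $\tau_{j+1}-\tau_j$ is assumed in Definition~\ref{adm}.

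The paper's remedy is short and avoids all of this: inequality \eqref{M>m} itself forces $a_j<1$ (the denominator $1-a_j$ must be positive for the bound on a positive supremum to make sense), and on $(0,1)$ one has the sharp inequality $\ln(1+t)> t\ln 2$, i.e.\ $1+t> 2^{t}$. This gives $\prod(1+a_j)\ge 2^{\sum a_j}$ with no loss of constant in the exponent, and the theorem follows exactly as stated.
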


\begin{proof}
By Lemma \ref{mixed-lm-2}, the relation (\ref{mod_M>m}) is fulfilled for $\tau_j\ge\tau_{j_*}$. For simplicity let $j_*=1$.
By Proposition \ref{mon}, there are two cases.

{\bf Case 1:} there is $j_0 > 1$ s.t.   for all $ j \ge j_0$ we have $M(\tau_{j+1})>M(\tau_j)$.

{\bf Case 2:}  for all $j>1$ we have $M(\tau_{j+1})<M(\tau_j)$.\medskip

In case {\bf 1} we obtain by (\ref{mod_M>m})
$$
\sup_{ \D(j+1)  }u \ge \sup_{ \D(j)  }u \cdot ( 1 + \kappa_j \mathbf{C}_{s_j}(H_j)R_j^{-{s_j}}), \quad  \text{ for }  j \ge j_0.
$$
Thus,
$$ 
\sup_{ \D(N) }u \ge \sup_{ \D(j_0) }u \cdot \prod_{j=j_0+1}^N( 1 + \kappa_j \mathbf{C}_{s_j}(H_j)R_j^{-{s_j}}), \quad { for  } \  N > j_0.
$$
Notice that $\kappa_j \mathbf{C}_{s_j}(H_j)R_j^{-{s_j}}<1$ by (\ref{M>m}). Since $\ln(1+t)>t\ln(2)$ for $t\in(0,1)$, we obtain for $ N > j_0$
$$
\ln M(\tau_N) \ge    \ln M(\tau_{j_0})+ \sum_{j=j_0+1}^N \ln( 1 + \kappa_j \mathbf{C}_{s_j}(H_j)R_j^{-{s_j}}) \ge    
 \ln M(\tau_{j_0})+\ln(2)\sum_{j=j_0+1}^N  \kappa_j \mathbf{C}_{s_j}(H_j)R_j^{-{s_j}},
 $$
or
$$
M(\tau_N) \ge  M(\tau_{j_0}) \cdot 2^{\sum_{j=j_0+1}^N  \kappa_j \mathbf{C}_{s_j}(H_j)R_j^{-s_j} }
\ge c_1\cdot 2^{\sum_{j=1}^N \kappa_j \mathbf{C}_{s_j}(H_j)R_j^{-{s_j}} }.
$$
\medskip

In case 2, we apply (\ref{mod_M>m}) again to get for $N \ge 3$
$$
M(\tau_2) \ge M(\tau_3)\cdot( 1 + \kappa_3 \mathbf{C}_{s_j}(H_3)R_3^{-{s_j}}) \ge \cdots \ge    
M(\tau_N)\cdot\prod_{j=3}^{N}( 1 + \kappa_j \mathbf{C}_{s_j}(H_j)R_j^{-{s_j}}).
$$
Arguing as above, we obtain 
$$ 
M(\tau_2) \ge M(\tau_N)\cdot 2^{\sum_{j=3}^{N} \kappa_j\mathbf{C}_{s_j}(H_j)R_j^{-{s_j}} }
\ge c_1\cdot 2^{\sum_{j=1}^N \kappa_j \mathbf{C}_{s_j}(H_j)R_j^{-{s_j}} }.
$$
This completes the proof of the Theorem.
\end{proof}

To illustrate Theorem \ref{narrow-thm} we consider the domain
\begin{equation}\label{funnel}
\tilde\D=\{x=(x_1, x_1^{\alpha}\bar x  ), \quad |\bar x|<1,\ x_1>1 \},\qquad \alpha<1
\end{equation}
(see Fig. \ref{Funel} for the case $\alpha<0$).  

\begin{figure}[ht]
\begin{center}
\advance\leftskip-3cm
\advance\rightskip-3cm
\includegraphics[scale=0.5]{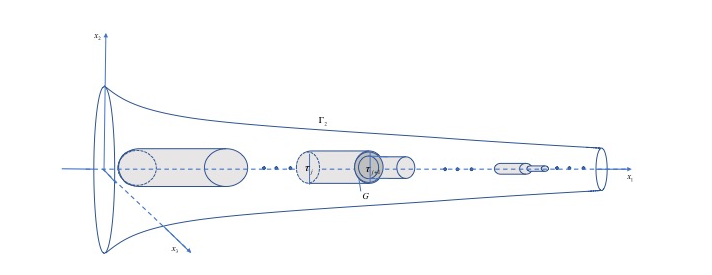}
\caption{ Funnel type domain $\D$. }
\label{Funel}
\end{center}\end{figure}

By Example 1 in Section \ref{growth-admissible}, $\tilde \D$ is admissible w.r.t. exterior normal vector field, if we choose $\tau_j=j^{\frac{1}{1-\alpha}}$.

Let $G$ be the union of cylinders:
$$
G=\bigcup\limits_{j>1} \Big(\overline{B}(0, c\tau_j^{\alpha})\times[\tau_j,\tau_{j+1}]\Big),
$$
where $c$ is sufficiently small positive constant.

Set $\D=\tilde \D\setminus G$, $\Gamma_2=\partial\tilde\D$, $\Gamma_1=\partial\D\setminus\Gamma_2$. It is easy to see that $G\subset\tilde \D$, 
and the domain $\D$ is admissible. Moreover, the assumption (\ref{cap-cond}) is satisfied with $z^0=0$, $R_j=2c(\tau_{j+1}-\tau_j)$ and $\varkappa_j=c$.

From  monotonocity of the $s$-capacity it follows (see chapter 2 in  \cite{landis-book}) that 
$$
\mathbf{C}_{s_j}(\tilde\D(j-1,j+1)\setminus\D) \ge C (\tau_{j+1}-\tau_j)(c\tau_j^{\alpha})^{{s_j}-1} \ge\frac Cc \, R_j^{s_j}.
$$

Taking into account (\ref{kapaj}) we rewrite the assumption (\ref{ser_div}) as follows:
\begin{equation}\label{sum_kappa_lambda}
\frac{C}{2^{N_0}} \sum_{j=1}^{\infty} \lambda^{s_j}=\infty, \qquad \lambda=\frac{2q^{N_0}}{a}.
\end{equation}
We recall that $a$, $q$ and $N_0$ are the constants from Definition \ref{adm} and $s_j= e(j)-2$. Notice that $\lambda<\frac 12$ since $q<\frac a4$.

Theorem \ref{narrow-thm} shows that if (\ref{sum_kappa_lambda}) holds then for any function $u$ subject to (\ref{subell})
one of two possibilities can happen: 
either
\begin{equation}\label{funnel_growth}
M(\tau_N) \ge c_1\cdot 2^{ \tilde c \sum_{j=1}^{N}\lambda^{s_j}},
\end{equation}
 or 
 \begin{equation} \label{funnel_decay} 
 M(\tau_N) \le  c_2\cdot 2^{-\tilde c \sum_{j=1}^{N} \lambda^{s_j}},
 \end{equation}
for some positive constants $c_1$, $c_2$, $\tilde c$.\medskip

{\bf Example 2:} { \bf{Uniformly elliptic equation.}} Let  $\sup_{j}e(j) < \infty$. Then $\lambda^{s_j}\ge const$. Therefore, 
\eqref{funnel_growth} and \eqref{funnel_decay} give the following behavior of sub-elliptic function at infinity (we recall that $\tau_N=N^{\frac 1{1-\alpha}}$): either
$$ 
M(r) \ge c_1 \exp\,({\hat c  r^{1-\alpha}}),
$$
or
$$
M(r) \le c_2 \exp\,({-\hat c  r^{1-\alpha}}).
$$
\smallskip

{\bf Example 3:} {\bf{Degenerate equation.}} Let the ellipticity function grow at infinity as $o(\ln(x_1))$. Namely, we suppose that $s_j=p(j)\ln(j)$, $j>1$, where
 \begin{equation} \label{o-ln} 
p(t)\searrow 0, \qquad p(t)\ln(t)\nearrow \infty, \qquad {\rm as}\quad t\to\infty.
 \end{equation}

We claim that
 \begin{equation} \label{sum-behavior} 
\sum\limits_{j=1}^{N} \lambda^{s_j}\sim N^{1+p(N)\ln(\lambda)} \qquad {\rm as}\quad N\to\infty.
 \end{equation}
Indeed, the series in (\ref{sum-behavior}) evidently diverges, and thus
$$
\sum\limits_{j=1}^{N} \lambda^{s_j}\sim \int\limits_1^N\lambda^{p(t)\ln(t)}dt.
$$
By the L'Hospital rule we derive, as $N\to\infty$, 
$$
\frac {\int_1^N\lambda^{p(t)\ln(t)}dt}{N^{1+p(N)\ln(\lambda)}}\sim \frac {\lambda^{p(N)\ln(N)}}{N^{p(N)\ln(\lambda)}\cdot(1+p(N)\ln(\lambda)+Np'(N)\ln(\lambda))}
=\frac 1{1+p(N)\ln(\lambda)+Np'(N)\ln(\lambda)}.
$$
However, (\ref{o-ln}) implies
$$
0<t(p(t)\ln(t))'=p(t)+tp'(t)<p(t)\searrow 0,\qquad t\to\infty, 
$$
and (\ref{sum-behavior}) follows.

Thus, in this case \eqref{funnel_growth} and \eqref{funnel_decay} give the following behavior of sub-elliptic function at infinity (we use the fact that 
$\ln(\lambda)<0)$: 
either
$$
M(r) \ge c_1 \exp\,\big({\hat c r^{(1-\alpha)(1-\overline c p(r^{1-\alpha}))}}\big),
$$
or 
$$
M(r) \le c_2 \exp\,\big({-\hat c r^{(1-\alpha)(1-\overline c p(r^{1-\alpha}))}}\big).
$$
Obviously, these estimates are worse that in uniformly elliptic case.\bigskip


\textbf{Acknowledgement.} D. Cao and A. Ibraguimov's research is supported by DMS NSF grant 1412796. A. I. Nazarov's research is supported by RFBR grant 18-01-00472.
\bigskip

\bibliographystyle{plain}

\end{document}